\documentclass[12pt, a4paper, twoside]{amsart}

\usepackage[T1]{fontenc}
\usepackage[utf8]{inputenc}
\usepackage{amsmath}
\usepackage{amsthm}
\usepackage{amsfonts}
\usepackage{bbm}
\usepackage[UKenglish]{babel}
\usepackage{enumerate}
\usepackage{bm}
\usepackage{ae}
\usepackage{amssymb}
\usepackage[draft]{fixme}
\usepackage{pgf,tikz}
\usepackage{url}

\theoremstyle{definition}
\newtheorem{defn}{Definition}

\theoremstyle{remark}

\theoremstyle{plain}
\newtheorem{thm}{Theorem}
\newtheorem{lem}[thm]{Lemma}

\makeatletter
\newcommand*{\house}[1]{%
  \mathord{%
    \mathpalette\@house{#1}%
  }%
}
\newcommand*{\@house}[2]{%
  \dimen@=\fontdimen8 %
      \ifx#1\scriptscriptstyle\scriptscriptfont
      \else\ifx#1\scriptstyle\scriptfont
      \else\textfont\fi\fi
      3 %
  \sbox0{%
    $#1%
      \vrule width\dimen@\relax
      \overline{%
        \kern2\dimen@
        \begingroup 
          #2%
        \endgroup
        \kern2\dimen@
      }%
      \vrule width\dimen@\relax
      \mathsurround=1.5\dimen@ 
    $%
  }%
  \ht0=\dimexpr\ht0-\dimen@\relax
  \dp0=\dimexpr\dp0+2\dimen@\relax
  \vbox{%
    \kern\dimen@ 
    \copy0 %
  }%
}
\makeatother

\DeclareMathOperator{\BBP}{BBP}
\DeclareMathOperator{\CTB}{CTB}

\subjclass{Primary 11K16; Secondary 11Y60}
\keywords{BBP-type formulas; null formulas; formulas for $\pi$}

\begin{document}

\title{BBP-type formulas --- an elementary approach}

\author{SIMON KRISTENSEN}

\address{S. Kristensen, Department of Mathematics, Aarhus University, Ny Munkegade 118,
  DK-8000 Aarhus C, Denmark}

\email{sik@math.au.dk}

\author{OSKAR MATHIASEN}

\address{O. Mathiasen, Department of Mathematics, Aarhus University, Ny Munkegade 118,
  DK-8000 Aarhus C, Denmark}

\email{201907609@post.au.dk}

\begin{abstract}
We provide a simple way of searching for formulas of the Bailey--Borwein--Plouffe type together with an algorithm and an implementation in \texttt{sage}. Aside from rediscovering some already known formulas, the method has been used in the discovery of a new BBP-type formula for $\sqrt{3}\pi$. In addition, the implementation is very flexible and allows us to look for BBP-type formulas to irrational bases but with integer coefficients. As an example of this, searching in various Pisot bases, we have discovered a formula for $\pi$ in base $1 + \sqrt{3}$, along with additional formulas.
\end{abstract}

\maketitle

\section{Introduction}

In 1997, D. Bailey, P. B. Borwein and S. Plouffe \cite{MR1415794} discovered the formula
\begin{equation}\label{eq:BBP}
\pi = \sum_{k=0}^\infty \frac{1}{16^k} \left(\frac{4}{8k+1} + \frac{-2}{8k+4}+\frac{-1}{8k+5} + \frac{-1}{8k+6}\right)
\end{equation}
The novelty of the formula is that it allows one to calculate -- using a spigot algorithm -- the $n$'th digit in the hexadecimal expansion of $\pi$ without knowing the preceeding digits. Since then, many other formulas of similar type have been discovered. We refer the reader to D. Bayley's on-line compendium \cite{bailey} for a list of some of the known formulas.

Before proceeding, we will need some notation. 

\begin{defn}
A BBP-type formula is a series of the form 
\[
\BBP(d,b,n,A) = \sum_{k=0}^\infty \frac{1}{b^k} \sum_{j=1}^n \frac{a_j}{(kn+j)^d},
\]
where $d, b, n \in \mathbb{N}$ are called the degree, base and number respectively, and $A = (a_1, \dots, a_n) \in \mathbb{R}^n$ is a vector.
\end{defn}

Often, one requires the $a_j$'s to be integers, and clearly this is the more interesting case. However, we will not be making this restriction in calculations, although we will present only results with integer coefficients. In this way, \eqref{eq:BBP} states that
\[
\pi = \BBP(1,16,8, (4,0,0,-2,-1,-1,0,0)).
\]

A unified approach to searching for BBP-type formulas was however sadly lacking until D. Barsky, V. Muñoz and R. Pérez-Marco \cite{MR4246921} provided a framework for searching for such formulas. Their idea is to set up a family of iterated integrals, depending on a single complex variable, $s$ say. These integrals are subsequently expanded into rapidly converging series. On substituting particular values of $s$ into the series and considering real and imaginary parts separately, one obtains BBP-type formulas for a variety of numbers. 

A main feature of their work is that they find representations of zero as a BBP-type formula. These so-called null formulas have been among the more mysterious ones in literature, and have given rise to relations between BBP-type formulas.

In this paper, we give a less general form of the main theoretical tool of Barsky, Muñoz and Pérez-Marco \cite{MR4246921}, which as it turns out admits a simple and intuitive geometric interpretation. In addition, it is rather simple to implement an algorithm based on the main result, which is flexible enough to deal with a number of interesting cases.

\section{Main theoretical results}

We present here our main theoretical result, which in its simplicity provides several BBP-type formulas.

\begin{thm}
\label{thm:main}
Let $\theta = \frac{a}{b}2\pi$ with $\frac{a}{b} \in \mathbb{Q}$ and $2 \vert b$ and let $r \in (0,1)$. Then,
\[
\arg(1+r \cdot e^{i\theta}) = \BBP\left(1,\tfrac{1}{r^b}, b, A\right),
\]
where $A = (a_1, \dots, a_b)$ with $a_j = r^j (-1)^{j+1} \sin(j\theta)$.
\end{thm}

\begin{proof}
Our starting point is the Taylor series for $\log(1+z)$ centred at $z=0$,
\[
\log(1+z) = \sum_{n=1}^\infty (-1)^{n-1} \frac{z^n}{n}.
\]
Substituting $z = r e^{i\theta}$, we get,
\[
\log(1+re^{i\theta}) = \sum_{n=1}^\infty (-1)^{n-1} \frac{r^n e^{in\theta}}{n}.
\]
Recalling that the argument of a complex number is nothing but the imaginary part of the logarithm of the number, 
\[
\arg(1+re^{i\theta}) = \sum_{n=1}^\infty (-1)^{n-1} \frac{r^n \sin(n\theta)}{n}.
\]
Now, $\sin(n\theta)$ is clearly $b$-periodic, so writing each $n = bk+j$ with $j=1,2,\dots, b$, we get since $2 \vert b$,
\[
\arg(1+re^{i\theta}) = \sum_{j=0}^\infty \left(\frac{1}{r^{b}}\right)^{-k} \sum_{j=1}^b \frac{r^j (-1)^{j+1}\sin(j\theta)}{bk+j},
\]
as required.
\end{proof}

Note that we could just as well have taken the real part in the proof to obtain a BBP-type formula for $\log \vert 1+r \cdot e^{i\theta} \vert = \frac{1}{2} \log(1+r^2+ 2r\cos \theta)$. In the example given below, this would have resulted in a BBP-type formula for $\log 2$. The same procedure gives a formula for the logarithm of a certain real number for each of our obtained formulas below. We have left it for the interested reader to deduce such formulas from the new ones obtained in this paper.

As an example of our main theorem, let us derive a BBP-type formula for $\pi/4$, originally found by Ferguson, Bailey and Arno \cite{MR1489971}. Letting $r = \tfrac{1}{\sqrt{2}}$, $a=3$ and $b = 8$, our theorem states that
\[
\frac{\pi}{4} = \arg\left(1+\tfrac{1}{\sqrt{2}} e^{\frac{3}{8}2\pi i}\right) = \BBP(1,16,8,A) = \sum_{k=0}^\infty \frac{1}{16^k} \sum_{j=1}^8 \frac{a_j}{8k+j},
\]
where $A = (a_1,\dots, a_8) = (\frac12, \frac12, \frac14,0, \frac{-1}8, \frac{-1}8, \frac{-1}{16},0)$.

We can interpret the main result geometrically in the following way. Consider the circle in $\mathbb{C}$ with centre $1$ and radius $r$, along with the triangle subtended by the three points $A = 0$, $B=1$ and $C = 1+ re^{i\theta}$. Clearly, $C$ lies on the circle considered. Moreover, the angle at $B$ is $\pi - 2\theta \pi$, a rational multiple of $\pi$. If also the angle at $A$ is a rational multiple of $\pi$, the theorem yields a BBP-type formula for $\pi$. In the example given, the triangle is an isosceles triangle with angles at $A$ and $B$ equal to $\frac{\pi}4$, see figure \ref{fig:triangle1}.

\begin{figure}
\scalebox{.7}{\begin{tikzpicture}
\draw[->] (-1.1*3,0) -- (4*3,0);
\draw[->] (0,-2*3) -- (0,2*3);
\draw (2*3,0) circle [radius=1.4142*3];
\draw[red, thick] (0,0) -- (3, 3) -- (6,0) -- (0,0);
\filldraw[fill=green!20,draw=green!50!black] (0,0) -- (.6,0)
      arc [start angle=0, end angle=45, radius=.6] node[anchor= west] {$\tfrac{\pi}4$} -- cycle ;
\filldraw[fill=green!20,draw=green!50!black] (6,0) -- (1.8*3,0)
      arc [start angle=180, end angle=135, radius=.6] node[anchor= east] {$\tfrac{\pi}4$}-- cycle;
\draw[fill=black] (0,0) circle (2pt) node[anchor=north east] {$A = 0$};
\draw[fill=black] (3,3) circle (2pt) node[anchor=west] {$C = 1+ \tfrac{1}{\sqrt{2}}e^{\frac{3}8 2 \pi i}$};
\draw[fill=black] (6,0) circle (2pt) node[anchor=north west] {$B=1$};
\end{tikzpicture}}
\caption{Proof that $\frac{\pi}{4} = \BBP(1,16,8,(\frac12, \frac12, \frac14,0, \frac{-1}8, \frac{-1}8, \frac{-1}{16},0))$}
\label{fig:triangle1}
\end{figure}
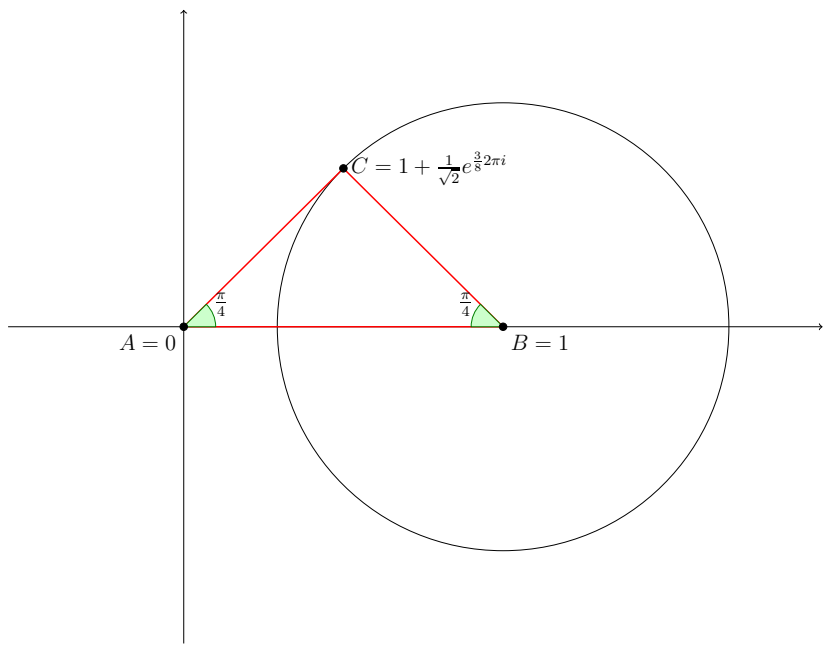

Of course, such triangles are rather uncommon. However, as we will see, we can sometimes combine triangles, which are not necessarily as nice, and obtain new BBP-type formulas. The observation which makes this work is the following lemma.

\begin{lem}
For fixed $d,n \in \mathbb{N}$ and $b > 1$, the map $\BBP(d,b,n, \cdot): \mathbb{R}^n \rightarrow \mathbb{R}$ is linear.
\end{lem}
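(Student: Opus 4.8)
The plan is to verify linearity directly from the definition, reducing everything to termwise manipulation of the defining series and then making sure the infinite sum causes no trouble. Concretely, for scalars $\alpha, \beta \in \mathbb{R}$ and vectors $A = (a_1,\dots,a_n)$, $A' = (a_1',\dots,a_n') \in \mathbb{R}^n$, I would aim to show
\[
\BBP(d,b,n,\alpha A + \beta A') = \alpha\, \BBP(d,b,n,A) + \beta\, \BBP(d,b,n,A').
\]

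First I would observe that for each fixed pair $(k,j)$ the summand $\frac{a_j}{(kn+j)^d}$ depends linearly on the coordinate $a_j$, so that replacing $A$ by $\alpha A + \beta A'$ turns it into $\frac{\alpha a_j + \beta a_j'}{(kn+j)^d} = \alpha \frac{a_j}{(kn+j)^d} + \beta \frac{a_j'}{(kn+j)^d}$. Summing the finitely many terms over $j = 1,\dots,n$ preserves this, since a finite sum of linear expressions is linear. The only genuine point requiring care is the outer infinite sum over $k$: splitting it into an $\alpha$-part plus a $\beta$-part, and pulling the scalars outside the limit, is legitimate precisely when the series involved converge.

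The key step is therefore to establish absolute convergence, which is where the hypothesis $b > 1$ enters. Since $kn + j \ge 1$ and $d \ge 1$, each term satisfies $\left|\frac{a_j}{(kn+j)^d}\right| \le |a_j|$, so the inner sum is bounded in absolute value by $\sum_{j=1}^n |a_j|$, and hence
\[
\sum_{k=0}^\infty \frac{1}{b^k} \sum_{j=1}^n \left|\frac{a_j}{(kn+j)^d}\right| \le \left(\sum_{j=1}^n |a_j|\right) \sum_{k=0}^\infty \frac{1}{b^k} = \frac{b}{b-1}\sum_{j=1}^n |a_j| < \infty.
\]
Thus $\BBP(d,b,n,A)$ is an absolutely convergent series for every $A$. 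With convergence secured, the standard facts that the sum of two convergent series equals the sum of their limits and that a scalar factors through the limit justify the termwise rearrangement above, which completes the argument.

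I do not expect a real obstacle here: the content is entirely the elementary observation that a convergent series built from expressions linear in the parameters is itself linear in those parameters. The one thing I would be careful not to gloss over is the convergence check, since without it the manipulation of the infinite sum would be unjustified; fortunately the geometric factor $b^{-k}$ with $b > 1$ makes this immediate.
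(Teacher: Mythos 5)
Your proof is correct and matches the paper's intent exactly: the paper omits the proof, remarking only that it is ``simply a matter of manipulating convergent series,'' which is precisely the termwise argument you give. Your explicit absolute-convergence check via the geometric factor $b^{-k}$ with $b>1$ is the one detail worth writing down, and you have it right.
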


The proof is simply a matter of manipulating convergent series and hence ommitted.

We would like to consider all BBP-type formulas arising from the main theorem and look for combinations of them, which give formulas of interest. Recall that the formulas arising in this way are for the angle of the $ABC$-triangle at $A$, so unless this number is of interest, the formulas obtained are not interesting in themselves, although it may happen that they combine with other formulas to reveal a formula of interest. In order to study these combinations, we define a function which given an even natural number $b$ we define the Circle-To-BBP map $\CTB_b: (0,1) \times \mathbb{N}\rightarrow \mathbb{R}^b$ by letting
\[
\CTB_b(r,a) = A = (a_1, \dots, a_b), \quad \text{with } a_j = r^j (-1)^{j+1} \sin\left(\tfrac{ja}{b}2\pi\right).
\]
The map reads off the vector associated to the BBP-type formula arising from Theorem \ref{thm:main} with the parameters $r$, $b$ and $\theta = \frac{a}b 2\pi$. In other words, the range of the map is the collection of permissible vectors from triangles arising by letting the point $C$ vary over all corners of a regular $b$-gon inscribed in a circle with centre $1$ and radius $r$. 

By periodicity of sine, $\CTB_b(r, \cdot)$ is clearly $b$-periodic, but in fact one only needs to consider $a \in \{1, \dots, \frac{b}2-1\}$. Indeed, using that sine is odd and $2\pi$-periodic, we immediately verify the relations in the following lemma.

\begin{lem}
For $b$ an even natural number and $r \in (0,1)$,
\begin{enumerate}
\item $\CTB_b(r,0)=\CTB_b(r, \frac{b}2)= 0$.
\item $\CTB_b(r, a) = - \CTB_b(r, b-a)$.
\end{enumerate}
\end{lem}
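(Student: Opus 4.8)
The plan is to verify both identities by direct substitution into the defining formula $a_j = r^j(-1)^{j+1}\sin\left(\frac{ja}{b}2\pi\right)$ for the components, exploiting only the oddness and $2\pi$-periodicity of sine, exactly as the preceding sentence suggests.

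For part~(1), I would first set $a=0$: every component becomes $a_j = r^j(-1)^{j+1}\sin(0)=0$, so the vector vanishes identically. For $a=\frac{b}{2}$, the argument of the sine in the $j$-th component simplifies to $\frac{j(b/2)}{b}2\pi = j\pi$, and since $\sin(j\pi)=0$ for every integer $j$, again every component is zero. Both cases are thus immediate.

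For part~(2), I would compare the $j$-th components of $\CTB_b(r,a)$ and $\CTB_b(r,b-a)$ directly. For the latter, the sine argument is $\frac{j(b-a)}{b}2\pi = 2\pi j - \frac{ja}{b}2\pi$. The key step is to peel off the integer multiple of $2\pi$: since $j$ is an integer, $2\pi$-periodicity gives $\sin\left(2\pi j - \frac{ja}{b}2\pi\right) = \sin\left(-\frac{ja}{b}2\pi\right)$, and oddness of sine then yields $-\sin\left(\frac{ja}{b}2\pi\right)$. The common prefactor $r^j(-1)^{j+1}$ is unaffected, so the $j$-th component of $\CTB_b(r,b-a)$ is exactly the negative of the $j$-th component of $\CTB_b(r,a)$, establishing the identity componentwise.

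There is no substantive obstacle here; the only point meriting care is that the shift $2\pi j$ appearing in part~(2) is a genuine integer multiple of $2\pi$, which is what licenses discarding it via periodicity. This is precisely why the integrality of the index $j$ is the feature doing the work.
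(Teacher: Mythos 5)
Your proof is correct and follows exactly the route the paper intends: the paper offers no written proof beyond the remark that the identities follow immediately from the oddness and $2\pi$-periodicity of sine, and your componentwise verification (using $\sin(0)=0$, $\sin(j\pi)=0$, and the peeling off of the integer multiple $2\pi j$) is precisely that argument carried out in full.
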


\section{An algorithm}

At this point, we are ready to turn the above results into an algorithm. We first explain how our algorithm works when searching for formulas for $0$. We will subsequently explain how to modify it to look for formulas for other constants. In this case, algorithm works as follows.

\begin{enumerate}
\item Fix an algebraic base number $r \in (0,1)$ (strictly speaking, the base will be $r^{-1}$) and an even natural number $b$.
\item Compute all vectors $\CTB_b(r,a)$ for $a \in \{1, \dots, \frac{b}2-1\}$.
\item \label{item:algo1} Using a standard procedure \cite{MR682664} for finding approximate integer relations between these vectors. We have used the LLL implementation in \texttt{sage} in our algorithm. We describe how to do this below.
\item \label{item:algo2} Verify that any approximate identities are in fact proper identities. In our case, we have used exact algebraic arithmetic in \texttt{sage} for this purpose on noting that all numbers in the vectors $\CTB_b(r,a)$ have coordinates which are arguments of algebraic numbers.
\item Output all relations found in this way.
\item Check for linear dependence among the obtained formulas to ensure that they are properly distinct. This is done by checking whether the Gram matrix of any selection of vectors is zero.
\end{enumerate}

The procedure in \cite{MR682664} describes how to find approximate integer relations between a set of real numbers. What we really need is an approximate relation between vectors, but in our case, this is the same thing if we work instead with the BBP-type formulas. Indeed, we do the following. First, we let $x_i = \BBP(1, r^{-b},b, \CTB_b(r,i))$ for $i=1, \dots, \frac{b}2-1$, and take suitably good rational approximations $\hat{x}_i$ to these numbers. We now pick a large integer $N$ and let $m_i = [N \hat{x}_i]$, the nearest integer to $N \hat{x}_i$. Consider now the lattice $\Lambda \subseteq \mathbb{R}^{\frac{b}2}$, spanned by the vectors $v_i = (0, \dots, 0, 1, 0, \dots, m_i)$ with $1$ in the $i$'th coordinate and $m_i$ in the $n+1$'st coordinate. Applying LLL to this lattice yields a non-zero vector of small norm. But any vector of the lattice $\Lambda$ is of the form
\[
\sum_{i=1}^{b/2 - 1} a_i v_i = \left(a_1, \dots, a_{b/2-1}, \sum_{i=1}^{b/2 - 1} a_i m_i\right).
\]
This vector is short only if the final coordinate is small, that is, if we have a near integer relation between the $m_i$.

We would hope that the relation is in fact an integer relation between the $x_i$, i.e. that $\sum_{i=1}^{b/2 - 1} a_i x_i = 0$. This is not an unreasonable hope. Indeed, if we let $\epsilon > 0$ be the worst among the qualities of our approximations $\hat{x}_i$, then
\begin{alignat*}{2}
N \left\vert \sum_{i=1}^{b/2 - 1} a_i x_i \right\vert &= \left\vert \sum_{i=1}^{b/2 - 1} a_i N(x_i - \hat{x}_i + \hat{x}_i) \right\vert \\ &\le N \epsilon \left\vert \sum_{i=1}^{b/2 - 1} a_i \right\vert + \left\vert \sum_{i=1}^{b/2 - 1}  a_i N \hat{x}_i \right\vert \\
& \le N \epsilon \left\vert \sum_{i=1}^{b/2 - 1} a_i \right\vert + \left\vert \sum_{i=1}^{b/2 - 1}  a_i m_i \right\vert + \frac{\left\vert \sum_{i=1}^{b/2 - 1}  a_i \right\vert}{2}.
\end{alignat*}
Letting 
\[
s = \left\Vert \sum_{i=1}^{b/2 - 1} a_i v_i \right\Vert^2
\]
the square of the Euclidean norm of the vector found by LLL, this immediately implies that 
\[
\left\vert \sum_{i=1}^{b/2 - 1} a_i x_i \right\vert \le \frac{s}{N} + \vert s \vert \epsilon.
\]
Note that by \cite{MR682664}, the number $s$ is within a constant factor of the square of the smallest non-zero Euclidean norm occurring for a vector in $\Lambda$. In other words, for $\epsilon$ small and $N$ large, we will expect the the left hand side to be really small and hence equal to zero to within some tolerance. Whether this approximate equality is in fact an equality is subsequently checked in the next step of the algorithm.

All that remains in order to include formulas for other numbers than zero is to include them among the $x_i$. If one is interested in obtaining a formula for $\pi$, say, one would just include $\pi$ in the list of $x_i$ as the $b/2$'th element, so that the lattice $\Lambda$ will now lie in $\mathbb{R}^{b/2+1}$. This will allow the algorithm to look for formulas for linear combinations including the new number as well, and if one was looking for integer combinations of other constants, these could also be added. Of course, it is entirely possible that the integer relations coming out of the search algorithm do not include the constants with a non-zero coefficient, in which case we have just recovered a zero formula. In fact, if one plugs in an entirely arbitrary number, in all probability the number in question does not admit a BBP-type formula at all, and thus will be ignored by the algorithm (though its inclusion may increase run time). In our search, we have looked for formulas for $\pi$ and multiples thereof. This is due to the argument below.

We now finally describe how to reduce a candidate identity to a problem in exact algebraic arithmetic, cf. Step \ref{item:algo2} of the algorithm. As described here, the procedure works for $\pi$.  

A candidate identity obtained in Step \ref{item:algo1} has the form 
\[
\sum_{i=1}^N a_i \arg\left(q+r e^{i\theta_i}\right) = b \pi,
\]
where the $a_i$ and $b$ are integers, not all zero; $N$ is the length of the formula, we are searching for (in our case $b/2-1$ or $b/2$); $r$ is algebraic; and the $\theta_i$ are rational multiples of $\pi$. Thus, $c_i = q+r e^{i\theta_i} \in \overline{\mathbb{Q}}$. Thus, rewriting and using properties of the argument,
\[
b\pi = \sum_{i=1}^N a_i \arg(c_i) = \sum_{i=1}^N \arg\left(c_i^{a_i}\right) = \arg\left(\prod_{i=1}^N c_i^{a_i}\right).
\]
Since Step \ref{item:algo1} outputs an integer $b$, this implies that the number inside the argument must be a real number, whence it must be equal to its complex conjugate. The $a_i$ are likewise integers, so the equality to be verified exactly is
\[
\prod_{i=1}^N c_i^{a_i} = \prod_{i=1}^N \overline{c_i}^{a_i},
\]
an identity in algebraic numbers, which can be verified in exact algebraic arithmetic.

An implementation of the above algorithm in \texttt{sage} can be found on \url{https://projects.au.dk/ada/}

The algorithm described above outputs any BBP-type zero formulas found in this way. As we will see below, it does not recover all known formulas. Nonetheless, we believe that the method has its merits. One such feature is the easily understood geometrical interpretation of the algorithm. Another is the fact that $r$ need not be an integer, but only an algebraic number. In our experiments, we have attempted to look for formulas with a Pisot number base. 

Another feature of the algorithm is that the format of the output in fact constitutes a geometric proof of the formula in question. This is to be understood in the sense that the output is a linear combination of BBP-type formulas given in terms of the $\CTB$-function. In order to remove any computer interaction from the proof of a given formula, one only needs to verify that the associated triangles have a good configuration. This is best illustrated with an example. The null formula
\begin{equation}
\label{eq:lafont}
\begin{split}
0 = 2^{12} \big(&\BBP(1,2^{12}, 24, \CTB_{24}(1/\sqrt{2}, 5))\\ &-  \BBP(1,2^{12}, 24, \CTB_{24}(1/\sqrt{2}, 11))\big),
\end{split}
\end{equation}
was originally found by Lafont in \cite{lafont}, though in the usual BBP-notation. Out algorithm recovers this formula. The form of the formula above allows us to turn the formula above into an easily verifiable geometric condition as illustrated in Figure \ref{fig:null-formula}. The large triangle gives rise to the first term, while the smaller triangle gives rise to the second term. The proof of the formula is simply a matter of verifying the fact that the complex numbers $0$, $1+2^{-1/2}e^{\frac{5}{12}\pi i}$ and $1+2^{-1/2}e^{\frac{11}{12}\pi i}$ are collinear -- essentially an exercise for undergraduates.

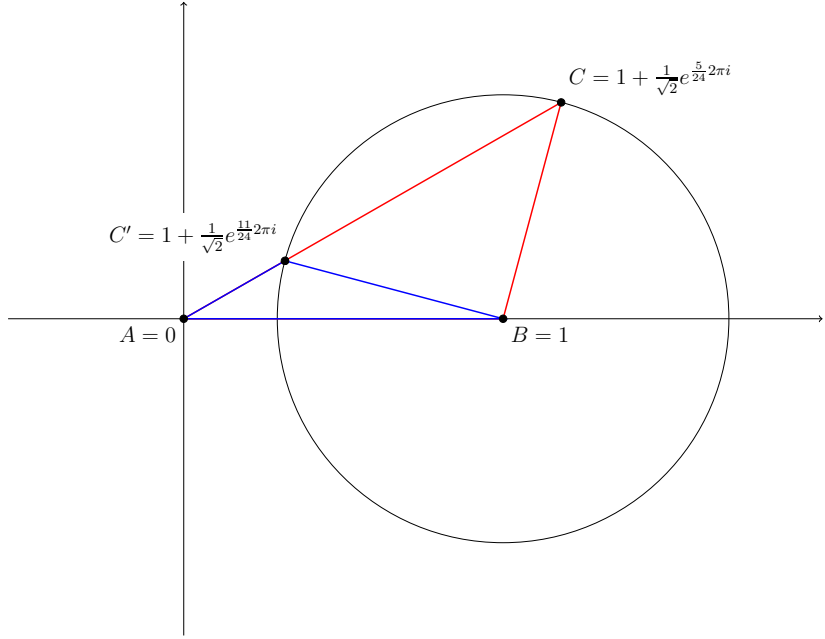
\begin{figure}
\scalebox{.7}{\begin{tikzpicture}
\draw[->] (-1.1*3,0) -- (4*3,0);
\draw[->] (0,-2*3) -- (0,2*3);
\draw (2*3,0) circle [radius=1.4142*3];
\draw[red, thick] (0,0) -- (7.09076, 4.098076) -- (6,0) -- (0,0);
\draw[blue, thick] (0,0) -- (1.901924, 1.098076) -- (6,0) -- (0,0);
\draw[fill=black] (0,0) circle (2pt) node[anchor=north east] {$A = 0$};
\draw[fill=black]  (7.09076, 4.098076) circle (2pt) node[anchor=south west] {$C = 1+ \tfrac{1}{\sqrt{2}}e^{\frac{5}{24} 2 \pi i}$};
\draw[fill=black]  (1.901924, 1.098076) circle (2pt) node[anchor=south east, fill=white] {$C' = 1+ \tfrac{1}{\sqrt{2}}e^{\frac{11}{24} 2 \pi i}$};
\draw[fill=black]  (1.901924, 1.098076) circle (2pt);
\draw[fill=black] (6,0) circle (2pt) node[anchor=north west] {$B=1$};
\end{tikzpicture}}
\caption{Geometric proof of Lafont's null formula.}
\label{fig:null-formula}
\end{figure}

\section{Computational results}

We consider first integer bases, which are powers of $2$ and $3$. In Table \ref{table:zero_formulas1}, we give a formula count of the known formulas in various bases and lengths with integer coefficients. In addition to the formulas with integer coefficients, our algorithm also finds formulas for $0$ with non-integer coefficients. These are omitted from the counts. All formulas obtained are of degree $1$. We list the values of base, the associated $r$ and the length $b$ as they occur in Theorem \ref{thm:main}.

\begin{table}
\begin{tabular}{|l|l|l|l|l|}
\hline
base & $r$ & $b$ & \# known formulas & \# recovered formulas \\
\hline
$2^4$ & $1/\sqrt{2}$ & $8$ & $1$ & $0$ \\
$2^6$ & $1/2$ & $6$ & $1$ & $0$ \\
$2^{12}$ & $1/\sqrt{2}$ & $24$ & $5$ & $1$ \\
$2^{20}$ & $1/\sqrt{2}$ & $40$ & $3$ & $1$ \\
$3^6$ & $1/\sqrt{3}$ & $12$ & $2$ & $1$ \\
\hline
\end{tabular}
\caption{Null formulas in integer bases}
\label{table:zero_formulas1}
\end{table}

In our condensed notation using the $\CTB$-notation which allows for easy geometric proof of the formulas, we have obtained the following known formulas aside from Lafont's formula \eqref{eq:lafont} discussed above.
\begin{equation*}
\begin{split}
0 = 2^{20} \big(& \BBP(1,2^{20}, 40, \CTB_{40}(1/\sqrt{2},3)) \\
&+\BBP(1,2^{20}, 40, \CTB_{40}(1/\sqrt{2},5)) \\
&+\BBP(1,2^{20}, 40, \CTB_{40}(1/\sqrt{2},11)) \\
&-\BBP(1,2^{20}, 40, \CTB_{40}(1/\sqrt{2},13)) \\
&-\BBP(1,2^{20}, 40, \CTB_{40}(1/\sqrt{2},15)) \\
&+\BBP(1,2^{20}, 40, \CTB_{40}(1/\sqrt{2},19)) \\
&-\BBP(1,2^{20}, 40, \CTB_{40}(1/\sqrt{2},20))\big),
\end{split}
\end{equation*}
which one finds in \cite{bailey}; and
\begin{equation}
\label{eq:zero}
\begin{split}
0 = 386\sqrt{3} \big(& \BBP(1,3^6, 12, \CTB_{12}(1/\sqrt{3},3)) \\
&- \BBP(1,3^6, 12, \CTB_{12}(1/\sqrt{3},5))\big),
\end{split}
\end{equation}
found by Adegoke in \cite{MR2740581}. The leading coefficients, which admittedly look a little silly, are there to ensure that if one rewrites the obtained formula as a single BBP-type formula, it becomes a pure formula without leading coefficient and with integer coefficients. For instance, on expanding and regrouping terms, \eqref{eq:zero} becomes
\[
0 = \BBP(1,729, 12, (243,-243,-324,-81,27,0,-9,9,12,3,-1,0)).
\]

Using our algorithm, we have found a BBP-type formula for $\sqrt{3}\pi$, which we have not been able to find in litterature. The formula reads 
\[
\sqrt{3}\pi = \frac{9}{2^{12}}\BBP(1,2^{12},24, A),
\]
with 
\begin{equation*}
\begin{split}
A = (&2^{11},0,0,2^{10}, 2^{9}, 0, 2^8,2^8, 0,0,2^6,0,\\&-2^5,0,0,-2^4, -2^3, 0, -2^2, -2^2,0,0,-1,0).
\end{split}
\end{equation*}
The computer generated proof found by the algorithm is
\begin{equation*}
\begin{split}
\sqrt{3}\pi  = 3 \sqrt{3} \big(& \BBP(1,2^{12}, 24, \CTB_{24}(1/\sqrt{2},5)) \\
&+ \BBP(1,2^{12}, 24, \CTB_{24}(1/\sqrt{2},11))\big). 
\end{split}
\end{equation*}
The BBP-type formula is found by expanding the right hand side.

The real merit of our algorithm is that it is able to produce BBP-type formulas to non-integer bases, while still producing formulas with integer coefficients. We have not been able to find examples of this in literature, though Adegoke \cite{MR3214376} does have BBP-type formulas in base $\phi$, the Golden ratio. However, the formulas obtained by Adegoke have coefficients in $\mathbb{Z}[\phi]$, which to us seems a little unnatural. The bases studied must be algebraic. We have made our algorithm run with several Pisot bases. Of course, the output will always be expansions with rational coefficients as is the case with integer bases, and as such they are not $\beta$-expansions in the sense of Rényi \cite{MR97374} and Parry \cite{MR142719}. Nevertheless, we find the expansions of interest. We are particularly intrigued by the rather large number of expansions with the Golden Ratio as a base.

In Table \ref{table:formulas2}, we present the number of null formulas found to various Pisot bases of low degree and height. In each case, the minimal polynomial of the Pisot number; the approximate value of the Pisot number, i.e. the real root which is  greater than one $1$, is given; the denominator $b$ is given; and the number of null formulas found are all given. The value of $r$ is in each case set equal to the reciprocal of the Pisot number in question. The values of $b$ chosen are $60$ and $24$ due to the smoothness of these numbers. The effect is that our algorithm finds formulas coming from even divisors of the numbers. 

\begin{table}
\begin{tabular}{|l|l|l|l|}
\hline
Polynomial & Approximate value& $b$ & \# formulas obtained  \\
\hline
$x^2-x-1$ & $1.618033$ & $60$ & $12$ \\
$x^2-2x-2$ & $2.732050$ & $60$ & $1$ \\
$x^2-2x-1$ & $2.414213$ & $60$ & $0$ \\
$x^2-3x+1$ & $2.618033$ & $60 $ & $4$ \\
$x^2-3x-1$ & $3.302775$ & $60$ & $0$ \\
$x^2-4x+2$ & $3.414213$ & $60$ & $0$ \\
$x^3-x-1$ & $1.3247179$ & $60$ & $3$ \\
$x^4 - x^3 - 1$ & $1.380277569$ & $24$ & $1$\\
\hline
\end{tabular}
\caption{Null formulas in Pisot bases}
\label{table:formulas2}
\end{table}

The first two lines of Table \ref{table:formulas2} are the Golden Ratio and $\sqrt{3}+1$ respectively. We denoting the Golden ratio by $\phi$, and putting $\psi = \sqrt{3}+1$, we present a small selection of the formulas obtained by our algorithm. For $\phi$, we have for instance the following formulas. Since searching for BBP-type formulas in non-integer bases appear to be largely unexplored territory, we have not found any of these in literature.
\begin{equation*}
\begin{split}
0  = - \big(& \BBP(1,\phi^{60}, 60, \CTB_{60}(\phi^{-1},18)) \\
&+\BBP(1,\phi^{60}, 60, \CTB_{60}(\phi^{-1},24)),
\end{split}
\end{equation*}
\begin{equation*}
\begin{split}
0  = - \big(& \BBP(1,\phi^{60}, 60, \CTB_{60}(\phi^{-1},14)) \\
&+\BBP(1,\phi^{60}, 60, \CTB_{60}(\phi^{-1},26)),
\end{split}
\end{equation*}
\begin{equation*}
\begin{split}
0  = - \big(& \BBP(1,\phi^{60}, 60, \CTB_{60}(\phi^{-1},8)) \\
&+\BBP(1,\phi^{60}, 60, \CTB_{60}(\phi^{-1},28)),
\end{split}
\end{equation*}
and
\begin{equation*}
\begin{split}
0  = - \big(& \BBP(1,\phi^{60}, 60, \CTB_{60}(\phi^{-1},8)) \\
&-\BBP(1,\phi^{60}, 60, \CTB_{60}(\phi^{-1},9)) \\
&+\BBP(1,\phi^{60}, 60, \CTB_{60}(\phi^{-1},21)).
\end{split}
\end{equation*}
In addition, our algorithm found a formula for for $\pi$ to base $\phi$,
\[
\pi = 10 \BBP(1,\phi^{15}, 15, \CTB(\phi^{-1},2)).
\]
Note that the $b$ in the latter formula is reduced to $15$. This reduction of the base is possible by the above remarks on the smoothness of $60$.

For $\psi =  \sqrt{3}+1$, the following null formula occurs in base $12$, even though we performed our search in base $60$,
\begin{equation*}
\begin{split}
0  = \big(& \BBP(1,\psi^{12}, 12, \CTB_{12}(\psi^{-1},2)) \\
&-\BBP(1,\psi^{12}, 12, \CTB_{12}(\psi^{-1},5)).
\end{split}
\end{equation*}
While this is the only null formula, which we have found, a formula for $\pi$ did show up,
\[
\pi = 12 \BBP(1,\psi^{12}, 12, \CTB_{12}(\psi^{-1},2)).
\]
Of course, by the two preceding formulas, it immediately follows that 
\[
\pi = 12 \BBP(1,\psi^{12}, 12, \CTB_{12}(\psi^{-1},5)).
\]

As a final remark on the non-integer formulas obtained, for the number $\phi+1$ with minimal polynomial $x^2-3x+1$, our usual approximation parameter, measured by $N$ in the application of LLL, of $N=10^{10}$ only resulted in two null formulas. Increasing the parameter to $N=10^{15}$ resulted in an additional two formulas. This suggests that we may be missing formulas, which should nevertheless show up as arising from triangles in the way described in this paper,  due to the restrictions in \texttt{sage}.

\section{Concluding remarks}

We end this paper with some concluding remarks and open problems.

First, the abundance of zero formulas found for the Golden Ratio is somewhat of a mystery! We really do not know why this particular irrational base yields so many zero formulas. Judging from the other Pisot numbers studied, it appears not to have anything to do with the Pisot'ness of the number. Of course, it is entirely possible that we have just not found all zero formulas for other numbers. The issue definitely requires further study.

Second, the original intention behind BBP-type formulas was to find deep digits in the expansions of numbers such as $\pi$, and indeed the original BBP formula allows one to calculate the $n$'th binary digit of $\pi$ without knowing all the preceding ones. For irrational bases, such as the Pisot bases studied in this paper, one also has a digital expansion, namely the $\beta$-expansions of Rényi  \cite{MR97374} and Parry \cite{MR142719}. It is not clear to us how to extract digital information from the expansions produced by our setup. Naively trying to extend the spigot algorithms to the irrational setup fails miserably. Obtaining digital information from our formulas remain an open problem.

Third and finally, we are only able to deal with BBP-type formulas of degree one in the present setup. Obtaining a similarly intuitive and geometrical approach to formulas of higher degree also remains an open problem. It would appear that the simple interpretation with triangles with vertices at $0$, $1$ and a point on a regular polygon on a particular circle is no longer appropriate for this type of problem. We leave it for the interested reader to ponder how to extend the interpretation of the formulas and their algorithmical implementation to the higher degree case.

\end{document}